\newtheorem{thm}{Theorem}
\begin{document}
\title{ Unbiasedness and Bayes Estimation}

\author{Siamak Noorbaloochi\thanks{Research was supported by IIR 12-340 –HSR\&D  grant, 
 Department of Veterans Affairs, Veterans Health Administration, 
Office of Research and Development, Health Services Research and Development}\\ 
 HSR\& D Center for Chronic Disease Outcomes Research\\
  VA Health Care System\\
  Department of Medicine\\
        University of Minnesota\\
        Minneapolis,  Minnesota \\
and\\
Glen Meeden \\
School of Statistics\\
University of Minnesota\\
School of Statistics\\
313 Ford Hall\\
224 Church ST S.E.\\
University of Minnesota\\
 Minneapolis, MN 55455-0460
}
\date{November, 2015}

\maketitle

\newpage

\begin{abstract}
 Assuming squared error loss, we show that  finding  unbiased estimators
and Bayes estimators  can be treated as using a pair of linear operators
that operate between two Hilbert spaces. We note that
   these integral operators are adjoint  and then investigate some
  consequences of this fact.
\end{abstract}

\vspace{.2in}

Key Words: Unbiasedness, Bayes estimators, squared error loss and consistency 

\newpage

\section{Introduction}

Statistical inference is used to produce 'plausible' data-based
assertions and rules about a partially unknown population. Two
well-adopted and seemingly adverse inference plausibility criteria are
unbiasedness and being Bayes.  The task of the current note is to
explore further the relationship between these two procedures by
treating them as operators that represent our inference procedures.

Let $p_{\theta}(\cdot)$ be the density of $X$ for the parameter
$\theta$. Let $\pi$ be the prior density for $\theta$, with the sample
space being denoted by $\mathcal{X}$ and the parameter space by
$\Theta$.  Suppose $\gamma$, some real-valued function defined on
$\Theta$, is to be estimated using $X$. A data-based rule $\delta$ is
said to be mean unbiased for $\gamma$ if $E_{\theta}\delta(X) =
\gamma(\theta)$ for all $\theta \in \Theta $. Under squared error
loss, for a given prior $\pi$ over $\Theta$, a rule $\delta_{\pi}$ is
a Bayes rule (against squared error loss) for $\gamma$ if
$E(\gamma(\theta)|x)=\delta_{\pi}(x)$ for all $x\in\mathcal{X}$.

 Lehmann (1951) \nocite{leh51} proposed a
generalization of the above notion of unbiasedness which takes into
account the loss function for the problem. Noorbaloochi and Meeden
(1983) \nocite{n-m83} proposed a generalization of Lehmann's definition which depends,
in addition, on a prior distribution $\pi$ for $\theta$.

In this note, restricting to squared error loss, corresponding to a
given prior $\pi$, a Hilbert space of all square integrable
real-valued functions of $X$ and $\theta$ is constructed. Given the induced inner
products and norms we observe that  unbiasedness and being Bayes are adjoint
operators. From this fact we derive some orthogonality relationships
between Bayes and unbiased estimators and the functions they are
estimating.

\section{Notation}
We begin with some  notation and the spaces that we will use to develop the discussion. Let
\[
{\cal H }_{\pi} =\{ h(x,\theta): \int \int h^2(x,\theta) p_{\theta} (x)
\pi (\theta) dx\, d\theta < \infty \}
\]
be the space of all square-integrable
real-valued functions of $(x,\theta)$.
Note ${\cal H}_{\pi}$ becomes a  Hilbert space
when it is equipped with the inner product
\[
(h_1,h_2)=\int\int h_1(x,\theta)h_2(x,\theta) p_\theta (x)\pi(\theta)dx\, d\theta
\]
Let $\| h\|_{\pi} = \sqrt{(h,h)}$ denote the norm of $h$. We include the subscript
$\pi$ to remind us that ${\cal H_{\pi}}$ does depend on $\pi$.
We assume that $p_{\theta}(x)$ is in the above space.

There are two linear subspaces of $\cal H_{\pi}$ which are of particular interest. The first
is
\[
\Gamma_{\pi}=\{\gamma(\theta):\int\gamma^2(\theta)\pi(\theta)d\theta\, <\infty\}
\] and the second is
\[
\Delta_{m}=\{\delta(x):\int\delta^2(x) m(x)dx\,<\infty \}
\]
where $m(x)=\int{p_\theta(x)\pi(\theta)\,d\theta}$.

The set $\Gamma_{\pi}$ is a Hilbert subspace of ${\cal H }_{\pi}$ with
the induced weighted inner product
$(\gamma_1,\gamma_2)_{\pi}=\int_{\Theta}\gamma_1(\theta)\gamma_2(\theta)\pi(\theta)\,d\theta$.
Similarly,
$\Delta_{m}$ is a Hilbert subspace with the induced weighted inner
product $(\delta_1,\delta_2)_m=\int_{\cal
  X}\delta_1(x)\delta_2(x)m(x)dx$. We also notice that, provided the
interchange of order of integration is permitted, for any
$\delta\in\Delta_m$ we have: $\text{Var}_{\theta}\delta(X)<\infty$ and
hence $E_{\theta}\delta(X)<\infty$ for all $\theta$ in the support of
$\pi$. If the support of $\pi$ is all of  $\Theta$, all members of
$\Delta_m$ are unbiased estimators of their expectations and, for any
$\delta\in\Delta_m$:
\[\int_{\Theta}E_{\theta}\delta(X)\pi(\theta)\,d\theta=E_m\delta(X)<\infty\]
\noindent implying that when $\pi(\theta)>0$ for all $\theta$, the set
of all  estimable functions, which we will denote by $\Gamma_e$, is
a subset of $\Gamma_{\pi}$.The assumed square-integrability of the
likelihoods and the Holder inequality imply that for any
$\gamma\in\Gamma_{\pi}$:
\[\int_{\Theta}\gamma(\theta)p_{\theta}(x)\pi(\theta)\,d\theta<\infty\]
\noindent and hence all have Bayes estimators. With the above
notation, the Euclidian distance between any $\gamma\in\Gamma_{\pi}$
and $\delta\in\Delta_{m}$ is $\| \delta -\gamma\|_{\pi}^2 =
r(\delta,\gamma;\pi)$, which is the Bayes risk associated with the
pair.

Let us define the  operator, $\mathcal{U}$
$$\mathcal{U}:\Delta_{m}\rightarrow\Gamma_{\pi}$$
\noindent as the \emph{unbiasedness operator} if
 for a given $\gamma$, $\mathcal{U}\delta=\gamma$ if and only if
\[
r(\delta,\gamma;\pi) = \inf_{\gamma^{\prime}\in\Gamma_{\pi}} r(\delta,\gamma^{\prime};\pi)
\]
For  mean-unbiasedness,  $\mathcal{U}$ can be defined through the integral transform:
$$\mathcal{U}:\Delta_{m}\rightarrow\Gamma_{\pi}  \,\,\,\text{if and only if}
\,\,\,E_{\theta}(\delta(X))=\gamma(\theta)\,\,\text{for all}\,\,\theta$$
\noindent Hence in this case, the operator $\mathcal{U}$ is a linear operator.

Similarly, the Bayes operator: $\mathcal{B}_{\pi}$ may be defined as:
$$\mathcal{B}_{\pi} :\Gamma_{\pi}\rightarrow\Delta_{m}$$
\noindent where for each $\gamma \in \Gamma_{\pi}$, $\mathcal{B}_{\pi} \gamma=\delta_{\pi}$
if and only if
\[
r(\delta_{\pi},\gamma;\pi) = \inf_{\delta^{\prime}\in\Delta_{m}} r(\delta^{\prime},\gamma;\pi)
\]
\noindent For squared error loss, the Bayes operator corresponds to
the linear operator:
$$\mathcal{B}_{\pi} :\Gamma_{\pi}\rightarrow\Delta_{m} \,\,\,\text{if and only if}
\,\,\,E(\gamma(\theta)|x)=\delta(x)\,\,\text{for all}\,\,x\in
\mathcal{X} $$

We are now ready to state the main observation of the manuscript. Throughout we always
assume that we are dealing with a fixed prior $\pi$.

\section{The relationship between unbiasedness and being Bayes}

Given the above setup we now show that
the Bayes and unbiasedness operators,
  $\mathcal{B}_{\pi}$ and $\mathcal{U}$ are the adjoint operators of each
  other. That is, for any $\gamma\in\Gamma_{\pi}$ and any $\delta\in
  \Delta_{m}$ we have
\begin{equation}
(\gamma,\mathcal{U}\delta)_{\pi}=(\mathcal{B}_{\pi} \gamma,\delta)_m
\label{eq:adj}
\end{equation}
The proof is easy, i.e,
 \begin{eqnarray*}
(\gamma,\mathcal{U}\delta)_{\pi}&=& \int_{\Theta}\gamma(\theta)E_{\theta}\delta(X)\pi(\theta)\,d\theta
=\int_{\cal X}\delta(x)[\int_{\Theta}\gamma(\theta)f_{\theta}(x)\pi(\theta)\,d\,\theta]dx \\
&=&\int_{\cal X}\delta(x)[\int_{\Theta}\gamma(\theta)\frac{f_{\theta}(x)\pi(\theta)} {m(x)}\,d\theta]m(x)dx
=(\delta,E(\gamma(\theta)\mid x))_m\\
&=&(\delta,\mathcal{B}_{\pi} \gamma)_m
\end{eqnarray*}

We note that the unbiased operator is independent of the chosen
prior and $\mathcal{U}$ simultaneously is the adjoint of all
$\mathcal{B}_{\pi}$ for \emph{all} priors with $\Theta$ support.

We denote the range of $\mathcal{U}$ by $\mathcal {R}(\mathcal{U})$. This
 is the set of all functions  in $\Gamma_{\pi}$ which have
an unbiased estimator. We denote the
range of $\mathcal{B}_{\pi} $ by
$\mathcal {R}(\mathcal{B}_{\pi} )$. This  is the
set of all Bayes estimators (with respect to  squared error loss)
in $\Delta_{m}$. In addition we have the null spaces of these two operators.
$\mathcal{N}(\mathcal{U})$ is the set of all unbiased estimators of zero.
If the model is complete this will contain just one function.
$\mathcal{N}(\mathcal{B}_{\pi})$  is the set of all functions with zero as  their Bayes
estimator. It is a basic result of functional analysis (see Rudin (1991)) 
\nocite{rud91} that from equation
\ref{eq:adj} we can write
\begin{align}
\Gamma_{\pi} & = {\mathcal{R}(\mathcal{U})}\bigoplus \mathcal{N}(\mathcal{B}_{\pi} ) \\
\Delta_{m} & ={{\mathcal R}(\mathcal{B}_{\pi} )}\bigoplus {\mathcal N}(\mathcal{U})
\end{align}
The first equation implies that  every member of $\Gamma_{\pi}$ can be orthogonally
decomposed into a function with an unbiased estimator plus a
function whose  Bayes estimator is zero and the second equation implies that
every member of $\Delta_{m}$ can be orthogonally
decomposed into a Bayes estimator (of some $\gamma$) plus an unbiased
estimator of zero and both these decompositions are unique. As far as we know this has
never been noted before and shows that the notions of being unbiased and being Bayes
are  more closely entwined than previously thought.

The first equation shows that given any $\gamma \in \Gamma_{\pi}$ there exist a unique
$\gamma_e \in \mathcal{R}(\mathcal{U})$ and a
unique $\alpha \in \mathcal{N}(\mathcal{B}_{\pi})$
such that
\[
\gamma(\theta) = \gamma_e(\theta) + \alpha(\theta) \quad \text{for}\; \theta \in \Theta
\]
So every function  will have an unbiased estimator if and only if the  only function
whose Bayes estimator is the zero function is zero function. Furthermore when
$\alpha$ is not the trivial function we see that the Bayes estimator of $\gamma$
must also be the Bayes estimator of $\gamma_e$.

The second equation shows that given  $\delta \in \Delta_{m}$ there exists a unique
$\delta_{\pi} \in {\mathcal R}(\mathcal{B}_{\pi} )$ and
a unique $\delta_0 \in {\mathcal N}(\mathcal{U})$
such that
\[
\delta(x) = \delta_{\pi}(x) + \delta_0(x) \quad \text{for}\; x \in \mathcal{X}
\]
So every decision function will be a Bayes estimator for some $\gamma$ if and only if
the only unbiased estimator of the zero function is the zero function. Also, the
orthogonal decompositions imply that

\begin{align}
\|\delta\|_m^2 & = \|\delta_{\pi}\|_m^2 + \|\delta_0\|_m^2 \\
\|\gamma\|_{\pi}^2 &  = \|\gamma_e\|_{\pi}^2 + \|\alpha\|_{\pi}^2
\end{align}

\section{Some  Consequences}

\begin{thm} Suppose $\mathcal{R}(\mathcal{U})$ is a proper subset of $\Gamma_{\pi}$.
Let $\delta \in \Delta_{m}$ be the Bayes estimator for some function
$\gamma \in \Gamma_{\pi}$ which does not belong to $\mathcal{R}(\mathcal{U})$.
Then there exists a unique $\gamma_e \in
{\mathcal{R}(\mathcal{U})}$ and an unique $\alpha_0 \in\mathcal{N}(\mathcal{B}_{\pi})$
such that $\delta$ is the Bayes estimator of $\gamma_e$. In addition, the Bayes
risk of $\delta$ when estimating $\gamma_e$ is strictly less than its Bayes
risk when estimating $\gamma$.

\end{thm}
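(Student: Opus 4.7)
The plan is to exploit the orthogonal decomposition $\Gamma_{\pi} = \mathcal{R}(\mathcal{U}) \oplus \mathcal{N}(\mathcal{B}_{\pi})$ already established in the excerpt, together with linearity of $\mathcal{B}_{\pi}$ and the adjointness identity \eqref{eq:adj}.

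First, I would apply the decomposition to $\gamma$: there exist unique $\gamma_e \in \mathcal{R}(\mathcal{U})$ and $\alpha_0 \in \mathcal{N}(\mathcal{B}_{\pi})$ with $\gamma = \gamma_e + \alpha_0$. Because $\gamma \notin \mathcal{R}(\mathcal{U})$, the component $\alpha_0$ must be nonzero (otherwise $\gamma = \gamma_e \in \mathcal{R}(\mathcal{U})$). This gives the existence and uniqueness assertions for free.

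Second, applying the linear operator $\mathcal{B}_{\pi}$ to both sides,
\[
\delta = \mathcal{B}_{\pi}\gamma = \mathcal{B}_{\pi}\gamma_e + \mathcal{B}_{\pi}\alpha_0 = \mathcal{B}_{\pi}\gamma_e,
\]
since $\alpha_0 \in \mathcal{N}(\mathcal{B}_{\pi})$. So $\delta$ is in fact the Bayes estimator of $\gamma_e$.

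For the strict inequality of Bayes risks I would compute
\[
r(\delta,\gamma;\pi) = \|\delta - \gamma\|_{\pi}^2 = \|(\delta - \gamma_e) - \alpha_0\|_{\pi}^2,
\]
and the goal is to show this equals $\|\delta - \gamma_e\|_{\pi}^2 + \|\alpha_0\|_{\pi}^2$ by a Pythagorean argument. Two orthogonality checks are needed. The first is $(\gamma_e,\alpha_0)_{\pi} = 0$, which is immediate from the orthogonal decomposition of $\Gamma_{\pi}$. The second, which is the only slightly delicate step, is $(\delta,\alpha_0) = 0$ in the ambient $\mathcal{H}_{\pi}$ inner product. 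I would obtain this directly from the adjointness identity \eqref{eq:adj}:
\[
(\alpha_0,\mathcal{U}\delta)_{\pi} = (\mathcal{B}_{\pi}\alpha_0,\delta)_m = (0,\delta)_m = 0,
\]
and then observe that $(\delta,\alpha_0)$ in $\mathcal{H}_{\pi}$ reduces, after integrating out $x$, exactly to $(\alpha_0,\mathcal{U}\delta)_{\pi}$. Combining the two gives $(\delta - \gamma_e,\alpha_0) = 0$, hence
\[
r(\delta,\gamma;\pi) = r(\delta,\gamma_e;\pi) + \|\alpha_0\|_{\pi}^2,
\]
and since $\alpha_0 \neq 0$ the excess $\|\alpha_0\|_{\pi}^2$ is strictly positive, which is the desired conclusion.

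The main obstacle, such as it is, lies in being careful about which inner product is used when pairing the $x$-function $\delta$ with the $\theta$-function $\alpha_0$: one has to recognize that the ambient $\mathcal{H}_{\pi}$ inner product is what makes the adjoint identity directly applicable, and that it specializes to the $\Gamma_{\pi}$ and $\Delta_m$ inner products on the respective subspaces. Once this is in hand, every step is an immediate consequence of linearity of $\mathcal{B}_{\pi}$ and the already-proven adjoint relation.
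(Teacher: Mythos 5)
Your proposal is correct and follows essentially the same route as the paper: decompose $\gamma=\gamma_e+\alpha_0$ via the orthogonal splitting $\Gamma_{\pi}=\mathcal{R}(\mathcal{U})\oplus\mathcal{N}(\mathcal{B}_{\pi})$, use linearity of $\mathcal{B}_{\pi}$ (i.e.\ of the posterior expectation) to get $\delta=\mathcal{B}_{\pi}\gamma_e$, and then a Pythagorean identity for the Bayes risks with the cross term vanishing. The only difference is cosmetic: where you invoke the adjoint relation \eqref{eq:adj} to show $(\delta,\alpha_0)=0$, the paper conditions on the data directly, writing $E_{\pi}E_{\theta}(\delta(X)\alpha_0(\theta))=E_m\bigl(\delta(X)E(\alpha_0(\theta)\mid X)\bigr)=0$ --- the same Fubini computation --- and you are in fact slightly more explicit than the paper in noting $\alpha_0\neq 0$, which is what makes the risk inequality strict.
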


\begin{proof}
  For a given $\gamma$, by equation 2 it  can be written as
\[
\gamma= \gamma_e + \alpha
\]
where $\gamma_e \in \mathcal{R}(\mathcal{U})$ and
$\alpha \in \mathcal{N}(\mathcal{B}_{\pi})$.  Then for each $x \in \mathcal{X}$
we have
\begin{align*}
E(\gamma(\theta)|x) &=E(\gamma_e(\theta)|x) + E(\alpha(\theta |x) \\
                    &= E(\gamma_e(\theta)|x)
\end{align*}
since $E(\alpha(\theta)|x) =0$. We note that the decomposition in
equation 2 is prior-dependent. Indeed, $\gamma_e$ is the projection of
$\gamma$ into $\Gamma_e$, that
is: $$\|\gamma(\theta)-\gamma_e(\theta)\|_{\pi}=min_{\gamma_e'\in\Gamma_e}\|\gamma(\theta)-\gamma_e'(\theta)\|_{\pi}$$

To prove the second part we note that

$$\|\delta_{\pi}-\gamma\|_{\pi}^2 = \|\delta_{\pi}-\gamma_e\|_{\pi}^2+ \|\alpha_0\|_{\pi}^2
-2(\delta_{\pi}-\gamma_e,\alpha_0)_{\pi}$$
\noindent but $E_{\pi}(\gamma_e\alpha_0)=0$ by orthogonality of
$\gamma_e$ and $\alpha_0$, and
$$E_{\pi}E_{\theta}(\delta_{\pi}(X)\alpha_0(\theta))=E_m(\delta_{\pi}(X)E(\alpha_0(\theta)|X))=0$$
\noindent since $\alpha_0(\theta)\in\mathcal{N}(\mathcal{B}_{\pi})$. Therefore,
$$\|\delta_{\pi}-\gamma\|_{\pi}^2 = \|\delta_{\pi}-\gamma_e\|_{\pi}^2+ \|\alpha_0\|_{\pi}^2$$
\end{proof}

\begin{thm} Let $\gamma$ be a member of $\mathcal{R}(\mathcal{U})$ and let $\delta$
be its Bayes estimator. Let $\lambda(\theta)= E_{\theta}\delta(X)$ and  $b(\theta)=
\gamma(\theta) - \lambda(\theta)$, the bias of $\delta$ as an estimator of $\gamma$.
Then the Bayes risk of $\delta$ as an estimator of $\gamma$ is $(b,\gamma)_{\pi}$.
\end{thm}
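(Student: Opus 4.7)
The plan is to expand the Bayes risk $r(\delta,\gamma;\pi)=\|\delta-\gamma\|_\pi^2$ by bilinearity of the inner product and then use the adjoint identity \eqref{eq:adj} twice to collapse every $\delta$-dependent quantity into something written purely in terms of $\lambda=\mathcal{U}\delta$ and $\gamma$. Because $\delta=\mathcal{B}_\pi\gamma$ by assumption, the adjoint relation is tailor-made to bring $\lambda$ into the picture.

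Concretely, I would first write
\[
r(\delta,\gamma;\pi)=\|\delta\|_\pi^2-2(\delta,\gamma)_\pi+\|\gamma\|_\pi^2.
\]
Next I would observe the two simple facts that the preceding discussion makes available. First, since $\delta$ depends only on $x$, integrating out $\theta$ against $\pi$ in $\|\delta\|_\pi^2$ just produces $m(x)$, so $\|\delta\|_\pi^2=\|\delta\|_m^2$. Second, applying \eqref{eq:adj} with the particular pair $(\gamma,\delta)$ and using $\mathcal{B}_\pi\gamma=\delta$ gives
\[
\|\delta\|_m^2=(\delta,\mathcal{B}_\pi\gamma)_m=(\mathcal{U}\delta,\gamma)_\pi=(\lambda,\gamma)_\pi.
\]
For the cross term $(\delta,\gamma)_\pi$, the same idea applies directly from the definitions: because $\gamma$ is free of $x$, the inner Fubini integration over $x$ acts only on $p_\theta(x)\delta(x)$ and produces $\lambda(\theta)$, so $(\delta,\gamma)_\pi=(\lambda,\gamma)_\pi$ as well.

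Plugging these two identifications into the expansion collapses it to
\[
r(\delta,\gamma;\pi)=(\lambda,\gamma)_\pi-2(\lambda,\gamma)_\pi+(\gamma,\gamma)_\pi=(\gamma-\lambda,\gamma)_\pi=(b,\gamma)_\pi,
\]
which is the claimed identity. I do not expect any serious obstacle: the proof is essentially a one-line computation once the adjoint relation \eqref{eq:adj} is invoked. The only hygiene step worth noting is that $\lambda=\mathcal{U}\delta$ belongs to $\Gamma_\pi$ (so that the inner products above are finite), but this is exactly the content of the setup in Section~2, where $\mathcal{U}$ is defined as a map $\Delta_m\to\Gamma_\pi$. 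The role of the hypothesis $\gamma\in\mathcal{R}(\mathcal{U})$ is simply to give a clean interpretation of $b=\gamma-\lambda$ as a bias; the algebra itself would go through for any $\gamma\in\Gamma_\pi$.
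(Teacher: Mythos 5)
Your proof is correct and is essentially the paper's own argument: both expand the quadratic Bayes risk and identify $E_m\delta^2=(\lambda,\gamma)_\pi=(\delta,\gamma)_\pi$, you by citing the adjoint identity \eqref{eq:adj} with $\mathcal{B}_\pi\gamma=\delta$, the paper by evaluating the cross term once conditioning on $X$ and once on $\theta$, which is the same computation. Your closing remark that $\gamma\in\mathcal{R}(\mathcal{U})$ is not needed for the algebra (only for reading $b$ as a bias) is also accurate.
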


\begin{proof}
 The Bayes risk of the Bayes rule $\delta_{\pi}$ for estimating $\gamma$ is:
\begin{align*}
r(\delta_{\pi},\gamma)& = E_{\pi}E_{\theta}(\delta(X)-\gamma(\theta))^2 \\
        & =
E_m\delta_{\pi}^2(X)+E_{\pi}\gamma^2(\theta)-E_{\pi}E_{\theta}(\delta_{\pi}(X)\gamma(\theta))
-E_{\pi}E_{\theta}(\delta_{\pi}(X)\gamma(\theta))
\end{align*}
but
$$E_{\pi}E_{\theta}(\delta_{\pi}(X)\gamma(\theta))=E_m(\delta_{\pi}(X)E(\gamma(\theta)|X))
=E_m\delta_{\pi}^2(X)$$
 and  similarly conditioning on $\theta$
$$E_{\pi}E_{\theta}(\delta_{\pi}(X)\gamma(\theta))=
E_{\pi}(\gamma(\theta)E(\delta_{\pi}(X)|\theta))=
E_{\pi}(\gamma(\theta)\lambda(\theta))$$
 Substituting these into the previous equation and simplifying we have
\begin{align*}
r(\delta_{\pi},\gamma)& = E_{\pi}\gamma^2(\theta)-E_{\pi}(\gamma(\theta)\lambda(\theta)) \\
   & =E_{\pi}\gamma^2(\theta)+E_{\pi}(\gamma(\theta)( b(\theta)-\gamma(\theta))) \\
& = E_{\pi}\gamma(\theta)b(\theta) \\
 & = (b,\gamma)_{\pi}
\end{align*}
\end{proof}

Note that an immediate corollary is the well known fact
that if an estimator is both unbiased and Bayes
for some $\gamma$ then its Bayes risk is zero.

Given a model, a prior and a function to be estimated, the Bayes risk of the Bayes
rule  is a measure of the informativeness of our
inferences. The smaller the size of the Bayes risk
 the more informative
is our "best" estimator about  the function being estimated. 
If this minimum is
large then the Bayes estimator is not very informative. (For further discussion
on this point see see Raiffa and
Schlaiffer(1962), DeGroot(1962,1984) and  Ginebra(2007)). The previous theorem
quantifies the relationship between the Bayes risk and bias and shows that
there will only be a ``good'' Bayes estimator when its bias is small.
\nocite{Raif1961} \nocite{Degroot62} \nocite{Degroot84} \nocite{Gin2007}

Therefore, in order to reduce the minimum Bayes risk of the Bayes estimator of the function
being estimated, one has to reduce the bias of the Bayes rule. This can
be achieved by increasing the sample size, as the following argument
shows. For the rest of this note we assume that given $\theta$,
$X_1,\ldots,X_n$ are independent and identically distributed random
variables. Furthermore $\|h\|_{\pi,n}^2$ will denote the norm for the
$n$ fold problem.

\begin{thm}
 For a sample of size one
let $U \in \Delta_{\pi,1}$ be an unbiased estimator of $\gamma \in \Gamma_{\pi}$
and $U_n = \sum_{i=1}^n U(X_i)/n$. Let $\delta_{\pi,n}$ be the
Bayes estimator of $\gamma$ based on the sample of size $n$.
If  $\|U_n - \gamma\|_{\pi,n}^2 \rightarrow 0$ as $ n\rightarrow \infty $  then
\begin{enumerate}
\item[i.] $\|\delta_{\pi,n} -\gamma\|_{\pi,n}^2 \rightarrow 0 $ as $n \rightarrow \infty$
\item[ii.] $\|U_n - \delta_{\pi,n}\|_{\pi,n}^2 \rightarrow 0 $ as $n \rightarrow \infty$
\end{enumerate}
\end{thm}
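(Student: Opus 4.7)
The plan is to derive both conclusions from a single Pythagorean identity in the Hilbert space $\mathcal{H}_{\pi,n}$ associated to a sample of size $n$. The key observation is that the Bayes estimator $\delta_{\pi,n}(x_1,\ldots,x_n) = E(\gamma(\theta)\mid x_1,\ldots,x_n)$ is the orthogonal projection of $\gamma$, viewed as an element of $\mathcal{H}_{\pi,n}$, onto the closed subspace $\Delta_{m,n}$ of functions of the sample alone.

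First I would check that $U_n$ lies in the relevant subspace and is unbiased: since $X_1,\ldots,X_n$ are iid given $\theta$ and $E_\theta U(X) = \gamma(\theta)$, we have $E_\theta U_n = \gamma(\theta)$, so $U_n \in \Delta_{m,n}$ and $\mathcal{U}U_n = \gamma$. Next, conditioning on the sample exactly as in the derivation of the adjoint identity \eqref{eq:adj}, I would verify that for every $\delta' \in \Delta_{m,n}$,
\[
(\delta' - \delta_{\pi,n},\, \delta_{\pi,n} - \gamma)_{\pi,n} = 0,
\]
because the factor $\delta_{\pi,n}(x) - \gamma(\theta)$ integrates to zero against the posterior $\pi(\theta\mid x)$ by the very definition of the posterior mean. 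This yields the Pythagorean identity
\[
\|\delta' - \gamma\|_{\pi,n}^2 = \|\delta' - \delta_{\pi,n}\|_{\pi,n}^2 + \|\delta_{\pi,n} - \gamma\|_{\pi,n}^2 .
\]

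Specializing to $\delta' = U_n$, both summands on the right are nonnegative and each is dominated by $\|U_n - \gamma\|_{\pi,n}^2$, so the hypothesis that the left-hand side tends to zero immediately yields (i) and (ii) simultaneously. There is no real obstacle in the argument: once the projection interpretation of $\delta_{\pi,n}$ is in place the conclusion is automatic, and the only item that requires any care is confirming that the cross-term vanishes for the weighted inner product $(\cdot,\cdot)_{\pi,n}$ even though $\gamma$ depends on $\theta$ alone and $\delta' - \delta_{\pi,n}$ depends on the sample alone; this is the same one-line conditioning calculation that already appears in the proof of the adjointness of $\mathcal{U}$ and $\mathcal{B}_\pi$.
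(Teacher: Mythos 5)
Your proposal is correct and takes essentially the same route as the paper: the identity $\|U_n-\gamma\|_{\pi,n}^2=\|U_n-\delta_{\pi,n}\|_{\pi,n}^2+\|\delta_{\pi,n}-\gamma\|_{\pi,n}^2$, with the cross-term killed by conditioning on the data, is exactly the paper's key step. The only (cosmetic) difference is that the paper gets (i) from the optimality of the Bayes rule after computing the Bayes risk of $U_n$ as $\tau/n$ and then uses the identity for (ii), whereas you read both conclusions off the single identity together with the stated hypothesis, which is a harmless streamlining.
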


\begin{proof}
Let $\tau$ be the Bayes risk of $U$ for estimating $\gamma$ when $n=1$. Then
the Bayes risk of $U_n$ is just $\tau/n$. But the Bayes risk for $\delta_{\pi,n}$
for estimating $\gamma$ is no greater than the Bayes risk of $U_n$ so part $i$
follows.

It is easy to see that
\[
\|U_n - \gamma\|_{\pi,n}^2 = \|U-\delta_{\pi,n}\|_{\pi,n}^2 +  \|\delta_{\pi,n} -\gamma\|_{\pi,n}^2
\]
by adding and subtracting $\delta_{\pi,n}$ inside the lefthand side and then multiplying
out and observing that the cross product term is zero by conditioning on the data.
Now part $ii$ follows from part $i$ and the above equation
 because both of the terms involving $\gamma$ go to zero as
 $n \rightarrow \infty$.

\end{proof}

The second part of the theorem implies for large $n$ that a Bayesian whose prior
is $\pi$ believes with high probability that their estimator will be close to
the unbiased estimator. Note another Bayesian with a different prior believes the
same thing. So they both expect agreement as the sample size increases.
This result is somewhat in the spirit of  one in
Blackwell and Dubins (1962).\nocite{b-d62}

It is well know that Bayes estimators are usually consistent and tend to agree as the
sample size increases.
Most of the standard arguments for the consistency of Bayes estimators start with
considering the asymptotic distribution of the posterior distribution,
see for example  Johnson (1970). \nocite{joh70} These arguments are closely
related to the asymptotic behavior of the maximum likelihood estimator and
can be technically quite complex.
For a  discussion of this point and more references
see  O'Hagan (1994).\nocite{oha94} Here however we tied
the asymptotic behavior of the Bayes estimator to that of an unbiased estimator.
The lack of assumptions and the
relative simplicity of this argument comes at the cost of being unable to
say anything directly about the asymptotic behavior of the posterior distribution.
But it does underline the relationship between being Bayes and unbiasedness.

\section{Two simple examples}

In this section we consider two simple examples to demonstrate in more detail
the relationships between the two concepts.

{\textbf{Example 1. }} Let $X$ be a random variable, with
probability mass function in the family:
$p_{\theta}(x)=\frac{1}{2},\,\, \text{for}\,\,
x=1,\,\,\,=\frac{1-\theta}{4}, \,\,\text{for}\,\, x=2,\,
\text{and}\, = \frac{1+\theta}{4}, \text{for}\,\, x=3$, with
$0<\theta\le 1$. For the uniform prior, the posterior is:
$\pi(\theta|x)=1, \,\text{when}\, x=1, =2(1-\theta),\,
\text{for}\,\, x=2,\,\, \text{and} \,\,=\frac{2(1+\theta)}{3},\,\,\text{for}\,\,
x=3$, with $m(x)=\frac{1}{2},\frac{1}{8}\, \text{and}\,\frac{3}{8}$,
respectively, for $x=1,2,\,\text{and}\,3$. For this example,
$\Gamma_{\pi}=\{\gamma(\theta):\int_0^1\gamma^2(\theta)\,d\theta<\infty\}$
and $\Delta_{m}={\mathcal{R}}^3$. It is easy to check that
$\mathcal{R}(U)=\{l(\theta):l(\theta)=a+b\theta\,\, a,b\in
\mathcal{R}\,\}$ and
$\mathcal{N}(U)=\mathcal{Z}=\{(a,-a,-a): a\in \mathcal{R} \}$. Letting
$c=\int_0^1\gamma(\theta)\,d\theta$ and
$d=\int_0^1\theta\gamma(\theta)\,d\theta$, we see that $
{\mathcal{R}(\mathcal{B}_{\pi}
  )}=\{(c,2(c-d),\frac{2(c+d)}{3}):\,\,c,d\in \mathcal{R}\}$.
We also have that
$$\mathcal{N}(\mathcal{B}_{\pi} )=\{\gamma(\theta): \int_0^1\gamma(\theta)
\,d\theta=0\,\,\text{and}\, \int_0^1\theta\gamma(\theta)\,d\theta=0 \}$$
Note that
$(\delta,z)_{m}=\sum_1^3\delta_{\pi}(x)z(x)m(x)=\frac{1}{2}ac-\frac{2(ac-d)}{8}-\frac{3}{8}
\frac{2(ac+d)}{3}=0$
confirming the orthogonality of Bayes rules and unbiased estimators of
zero. Also, for any
$\gamma(\theta)\in \mathcal{N}(\mathcal{B}_{\pi} )$ and any $l(\theta)=a+b\theta\in
\mathcal{R}(\mathcal{U})$:
$$(\gamma,l)_{\pi}=\int_0^1\gamma(\theta)(a+b\theta)
\,d\theta=a\int_0^1\gamma(\theta)+b\int_0^1\theta\gamma(\theta)=0$$
and  thus confirming  the orthogonality of functions which have an unbiased estimator
and those whose Bayes estimate is zero.

{\textbf{Example 2. }} Let $X$ be a Bernoulli,
with success probability $\theta$ and the  prior is uniform over
$(0,1)$. The class of linear functions in $\theta$ is the subspace of
estimable functions, $\Gamma_e$. The function $e^{\theta}$ does not have an 
unbiased estimator
but its Bayes estimator is $E(e^{\theta}|x)=2(e-2)$, when $x=0$
and is $=2$, when $x=1$. However, projecting $e^{\theta}$
into this subspace using the least square procedure by solving:
$$\begin{cases}
    a_0+a_1E_{\pi}\theta=E_{\pi}e^\theta\\
    a_0E_{\pi}\theta+a_1E_{\pi}\theta^2=E_{\pi}\theta e^\theta ,
  \end{cases}$$ 
  
yields the the function
  $\gamma_e(\theta)=(4e-10)+(18-6e)\theta$. This function has the same
  Bayes estimator as $e^{\theta}$ and the Bayes estimator of the
  approximation error,
  $(4e-9)+(17-6e)\theta+\sum_2^{\infty}\theta^k/k!$, is zero. However
  the same Bayes estimator of $e^{\theta}$ and $(4e-10)+(18-6e)\theta$
  have different Bayes risks: $1/2(3e^2-16e+21)\approx 0.1626$ and
  $2e^2-12e+18\approx 0.1587$, respectively.

Suppose now we wish to estimate the function $1-\theta^2$. Its Bayes estimator,
say $\delta^*$, estimates 5/6 when $X=0$ and 1/2 when $X=1$.
It is easy to check
that the only polynomials of degree 2 whose Bayes estimator is the zero function are of
the form $a/6 - a\theta + a\theta^2$ for some real number $a$. Hence the decomposition
\[
1-\theta^2 = (7/6 - \theta) + (-1/6 + \theta - \theta^2)
\]
breaks it up into the sum of a function with an unbiased estimator  and a function
whose Bayes estimator is the zero function. So $\delta^*$ is also the Bayes estimator
of the function $7/6 - \theta$.

More generally, in the binomial case of size n, if we let,
$P_0(\theta)=1$ and for $k=1,\dots,n$ let $P_k(\theta)$ be orthonormal polynomials
with respect to the inner product of $\Gamma_{\pi}$, which together forms a
basis for the space of the functions which have unbiased estimators,
then the Bayes estimator of any $\gamma\in\Gamma_{\pi}$ is

\[E(\gamma(\theta)|x)=\sum_{k=0}^{n}(\gamma,P_k)_{\pi}E(P_k(\theta)|x)=E_{\pi}\gamma(\theta)+
\sum_{k=1}^{n}(\gamma,P_k)_{\pi}E(P_k(\theta)|x)\]

\noindent This shows that all the Bayes estimates belong to the linear
space spanned by the Bayes estimates of this basis and moreover,
$\gamma(\theta)-\sum_{0}^{n}(\gamma,P_k)_{\pi}P_k(\theta)$ has zero as
its Bayes estimator. Note that the space of unbiasedly estimable
functions is independent of the prior, but the projection of a given
function $\gamma$ (without an unbiased estimator) into this subspace
depends on the prior through the induced inner product.

\section{Final remarks}

We see from equation (2)  that a function $\gamma$ has an unbiased estimator
if  and only if it is orthogonal to every function of the parameter which has
the zero function as its Bayes estimator. From equation (3) we see
that a function $\delta$ is a Bayes estimator of some $\gamma$ if and only if
it is orthogonal to all unbiased estimators of the zero function.

This becomes clearer if we think about the situation where both
 $\Theta$ and $\mathcal{X}$ are  finite. Let $K$ be the number of elements in $\Theta$
and $N$ be the number of elements in $\mathcal{X}$. Then $\gamma$ has an  unbiased
 estimator $\delta$ if
\begin{equation}
\sum_{x \in \mathcal{X}} (\delta(x) - \gamma(\theta))p_{\theta}(x) = 0 \quad\text{for}\;
    \theta \in \Theta
\label{eq:unb}
\end{equation}
while $\delta$ is Bayes for $\gamma$ if
\begin{equation}
\sum_{\theta \in \Theta}(\delta(x)-\gamma(\theta)) \pi(\theta|x)=0
\quad\text{for}\; x \in \mathcal{X}
\label{eq:bay}
\end{equation}
These are two  systems of linear equations. The first has
$K$ equations in $N$ unknowns, the $\delta(x)$'s, while the second
has $N$ equations in $K$ unknowns, the $\gamma(\theta)$'s. 
We may arrange the members of the statistical model into the
$K\times N$ stochastic matrix having as its rows
the $K$ probability mass functions, $P=(p_{\theta_i}(x_j))$.
The collection of posteriors yield a 
$N$ by $K$ stochastic matrix $\Pi=(\pi(\theta_i|x_j))$. Here $\mathcal{U}$
is the matrix $P$ and $\Pi$ is $\mathcal{B}_{\pi}$. We notice that the two
matrices, $P$ and $\Pi$, need not be square and hence necessarily
are not projection matrices. It is the
relationship between ranks of $P$ and $\Pi$ which determines the form of the
four linear subspaces described above and results in the close relationship between
being Bayes and being unbiased.

In this note we have seen that 
considering  inference procedures as operators acting between the
data and the parameter space can lead to new insights about the 
relationship between being Bayes and unbiasedness. This suggests that
there could be  other insights that arise from this perspective.

\bibliography{refnew}
\bibliographystyle{apalike}

\end{document}